\documentclass[12pt]{amsart}
\usepackage{amsmath,amssymb,amsfonts,amsthm,amstext,graphicx,xcolor,enumerate}
\usepackage{cleveref,url,mathtools}
\usepackage{tikz}
\usepackage[numbers,sort&compress]{natbib}
\usepackage{epstopdf}

\usepackage[margin=2.7cm]{geometry}

\usetikzlibrary{shapes}
\usetikzlibrary{arrows}
\usetikzlibrary{positioning}
\usetikzlibrary{matrix}
\usetikzlibrary{decorations.markings}
\usetikzlibrary{backgrounds}

\title[Perfect snake-in-the-box codes]
{Perfect snake-in-the-box codes \\ for rank modulation}

\newtheorem{thm}{Theorem}
\newtheorem*{thm*}{Theorem}
\newtheorem{prop}[thm]{Proposition}
\newtheorem{lemma}[thm]{Lemma}
\newtheorem{cor}[thm]{Corollary}

\crefname{thm}{Theorem}{Theorems}
\crefname{lemma}{Lemma}{Lemmas}
\crefname{prop}{Proposition}{Propositions}
\crefname{cor}{Corollary}{Corollaries}
\crefname{section}{Section}{Sections}
\crefname{figure}{Figure}{Figures}
\crefname{table}{Table}{Tables}

\newcommand{\A}{\mathcal A}
\newcommand{\G}{\mathcal G}
\renewcommand{\S}{\mathcal S}
\newcommand{\s}{{*}}
\DeclareMathOperator{\id}{id}
\DeclareMathOperator{\order}{order}

\newcommand{\T}{\Delta}

\colorlet{darkgreen}{green!60!black}

\tikzset{midarrow/.style={decoration={
  markings,
  mark=at position .5 with {\arrow{>}}},postaction={decorate}}}

\newcounter{mycount}
\newenvironment{ilist}{\begin{list}{\rm(\roman{mycount})}%
   {\usecounter{mycount}\labelwidth=4mm\itemsep 3pt\leftmargin 1cm\rightmargin 17mm}}{\end{list}}

\newcommand{\df}[1]{\textbf{\boldmath #1}}

\author{Alexander E.\ Holroyd}
\address{Alexander E.\ Holroyd, Microsoft Research,
1 Microsoft Way, Redmond, WA 98052, USA} \email{holroyd at microsoft.com}
\urladdr{\url{http://research.microsoft.com/~holroyd/}}

\keywords{Hamiltonian cycle; Cayley graph; snake-in-the-box; Gray code;
rank modulation}
\date{24 February 2016 (revised 10 October 2016)}

\begin{document}
\begin{abstract}
For odd $n$, the alternating group on $n$ elements is
generated by the permutations that jump an element from
any odd position to position $1$. We prove Hamiltonicity
of the associated directed Cayley graph for all odd
$n\neq 5$.  (A result of Rankin implies that the graph is
not Hamiltonian for $n=5$.)  This solves a problem
arising in rank modulation schemes for flash memory.  Our
result disproves a conjecture of Horovitz and Etzion, and
proves another conjecture of Yehezkeally and Schwartz.
\end{abstract}
\maketitle

\section{Introduction}

The following questions are motivated by applications involving flash memory.
Let $S_n$ be the \df{symmetric group} of permutations
$\pi=[\pi(1),\ldots,\pi(n)]$ of $[n]:=\{1,\ldots,n\}$, with composition
defined by $(\pi\rho)(i)=\pi(\rho(i))$.  For $2\leq k \leq n$ let
$$\tau_k:=\bigl[k,\;1,2,\ldots,k-1,\;k+1,\ldots,n\bigr]\in S_n$$
be the permutation that jumps element $k$ to position $1$ while shifting
elements $1,2,\dots,k-1$ right by one place. Let $\S_n$ be the \df{directed
Cayley graph} of $S_n$ with generators $\tau_2,\ldots,\tau_n$, i.e.\ the
directed graph with vertex set $S_n$ and a directed edge, labelled $\tau_i$,
from $\pi$ to $\pi\tau_i$ for each $\pi\in S_n$ and each $i=2,\ldots,n$.

We are concerned with self-avoiding directed cycles (henceforth referred to
simply as \df{cycles} except where explicitly stated otherwise) in $\S_n$. (A
cycle is self-avoiding if it visits each vertex at most once).  In
applications to flash memory, a permutation represents the relative ranking
of charges stored in $n$ cells. Applying $\tau_i$ corresponds to the
operation of increasing the $i$th charge to make it the largest, and a cycle
is a schedule for visiting a set of distinct charge rankings via such
operations.  Schemes of this kind were originally proposed in \cite{jiang}.

One is interested in maximizing the length of such a cycle, since this
maximizes the information that can be stored.  It is known that $\S_n$ has a
directed \df{Hamiltonian} cycle, i.e.\ one that includes \emph{every}
permutation exactly once; see
e.g.~\cite{johnson,holroyd-ruskey-williams,jiang,knuth-4-2}. However, for the
application it is desirable that the cycle should not contain any two
permutations that are within a certain fixed distance $r$ of each other, with
respect to some metric $d$ on $S_n$. The motivation is to avoid errors
arising from one permutation being mistaken for another
\cite{jiang,mazumdar}. The problem of maximizing cycle length for given $r,d$
combines notions of Gray codes~\cite{savage} and error-detecting/correcting
codes~\cite{baylis}, and is sometimes known as a snake-in-the-box problem.
(This term has its origins in the study of analogous questions involving
binary strings as opposed to permutations; see e.g.~\cite{snake}).

The main result of this article is that, in the case that has received most
attention (described immediately below)
there is a cycle that is \df{perfect},
i.e.\ that has the maximum size even among arbitrary sets of
permutations satisfying the distance constraint.

More precisely, our focus is following case considered in
\cite{yehezkeally-schwartz,horovitz-etzion,zhang-ge}. Let $r=1$ and let $d$
be the \df{Kendall tau} metric \cite{kendall}, which is defined by setting
$d(\pi,\sigma)$ to be the inversion number of $\pi^{-1}\sigma$, i.e.\ the
minimum number of elementary transpositions needed to get from $\pi$ to
$\sigma$. (The $i$th elementary transposition swaps the permutation elements
in positions $i$ and $i+1$, where $1\leq i\leq n-1$). Thus, the cycle is not
allowed to contain any two permutations that are related by a single
elementary transposition. The primary object of interest is the maximum
possible length $M_n$ of such a directed cycle in $\S_n$.

It is easy to see that $M_n\leq n!/2$.  Indeed, any set of permutations
satisfying the above distance constraint includes at most one from the pair
$\{\pi,\pi\tau_2\}$ for every $\pi$, but these pairs partition $\S_n$.  To
get a long cycle, an obvious approach is to restrict to the \df{alternating
group} $A_n$ of all even permutations. Since an elementary transposition
changes the parity of a permutation, this guarantees that the distance
condition is satisfied. The generator $\tau_k$ lies in $A_n$ if and only if
$k$ is odd. Therefore, if $n$ is odd, this approach reduces to the problem of
finding a maximum directed cycle in the directed Cayley graph $\A_n$ of $A_n$
with generators $\tau_3,\tau_5,\ldots,\tau_n$.  Yehezkeally and Schwartz
\cite{yehezkeally-schwartz} conjectured that for odd $n$ the maximum cycle
length $M_n$ is attained by a cycle of this type; our result will imply this.
(For even $n$ this approach is less useful, since without using $\tau_n$ we
can access only permutations that fix $n$.)  As in
\cite{yehezkeally-schwartz,horovitz-etzion,zhang-ge}, we focus mainly on odd
$n$.

For small odd $n$, it is not too difficult to find cycles in $\A_n$ with
length reasonably close to the upper bound $n!/2$, by ad-hoc methods. Finding
systematic approaches that work for all $n$ is more challenging.  Moreover,
getting all the way to $n!/2$ apparently involves a fundamental obstacle, but
we will show how it can be overcome.

Specifically, it is obvious that $M_3=3!/2=3$. For general
odd $n\geq 5$, Yehezkeally and Schwartz
\cite{yehezkeally-schwartz} proved the inductive bound
$M_n\geq n(n-2)M_{n-2}$, leading to $M_n\geq
\Omega(n!/\sqrt n)$ asymptotically.  They also showed by
computer search that $M_5=5!/2-3=57$. Horowitz and Etzion
\cite{horovitz-etzion} improved the inductive bound to
$M_n\geq (n^2-n-1)M_{n-2}$, giving $M_n=\Omega(n!)$.  They
also proposed an approach for constructing a longer cycle
of length $n!/2 - n+2(=(1-o(1))n!/2)$, and showed by
computer search that it works for $n=7$ and $n=9$. They conjectured
that this bound is optimal for all odd $n$.  Zhang and Ge
\cite{zhang-ge} proved that the scheme of
\cite{horovitz-etzion} works for all odd $n$, establishing
$M_n\geq n!/2-n+2$, and proposed another scheme aimed at
improving the bound by $2$ to $n!/2 - n+4$. Zhang and Ge
proved that their scheme works for $n=7$, disproving the
conjecture of \cite{horovitz-etzion} in this case, but were
unable to prove it for general odd $n$.

The obvious central question here is whether there exists a perfect cycle,
i.e.\ one of length $n!/2$, for any odd $n>3$. As mentioned above, Horovitz
and Etzion \cite{horovitz-etzion} conjectured a negative answer for all such
$n$, while the authors of \cite{zhang-ge,yehezkeally-schwartz} also speculate
that the answer is negative. We prove a \emph{positive} answer for $n\neq 5$.

\begin{thm}\label{main}
For all odd $n\geq 7$, there exists a directed Hamiltonian
cycle of the directed Cayley graph $\A_n$ of the
alternating group $A_n$ with generators
$\tau_3,\tau_5,\ldots,\tau_n$.  Thus, $M_n=n!/2$.
\end{thm}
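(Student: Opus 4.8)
\smallskip
\noindent\emph{Proof idea.} The plan is to argue by induction on odd $n$ in steps of $2$, using the following coset decomposition of $A_n$. By the result quoted in the abstract, applied to $n-2$, the generators $\tau_3,\dots,\tau_{n-2}$ on their own generate a copy of $A_{n-2}$ inside $A_n$, namely the pointwise stabiliser of the last two positions. Its left cosets are indexed by the ordered pair $(a,b)=(\pi(n-1),\pi(n))$ of last two one-line entries; each coset carries, via $g\mapsto\pi_0 g$, an isomorphic copy of $\A_{n-2}$ whose edges are precisely those labelled $\tau_3,\dots,\tau_{n-2}$; and the one remaining generator $\tau_n$ carries all the inter-coset edges, the edge out of $\pi$ in coset $(a,b)$ leading to $\pi\tau_n$, which lies in coset $(\pi(n-2),a)$. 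Hence coset $(a,b)$ has $\tau_n$-edges exactly to the cosets $(c,a)$ with $c\neq a,b$ --- and never to $(b,a)$.

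To construct a Hamiltonian cycle one visits the $n(n-1)$ cosets cyclically, traversing a Hamilton path of $\A_{n-2}$ inside each and stepping to the next along a single $\tau_n$-edge. The cyclic order should be that of a non-backtracking Eulerian circuit of the complete digraph $\vec K_n$ on $[n]$, the coset $(a,b)$ being identified with the arc $a\to b$: its $n(n-1)$ steps are exactly the admissible transitions $(a,b)\to(c,a)$, $c\neq a,b$, the non-backtracking condition encoding the forbidden step to $(b,a)$, and such circuits exist for all $n\ge 4$. Writing $B_1,\dots,B_N$ for the cosets in this cyclic order, one picks a vertex $v_j\in B_j$ having the (forced) correct first entry and sets $u_j:=v_{j+1}\tau_n^{-1}$; a direct check shows that $u_j\in B_j$, that $u_j\to u_j\tau_n=v_{j+1}$ is the $\tau_n$-edge we want, and that $v_j\neq u_j$ can be arranged once $(n-3)!/2\ge 2$, i.e.\ $n\ge 7$. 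A Hamilton path of $B_j$ from $v_j$ to $u_j$ for every $j$ then concatenates into a single Hamiltonian cycle of $\A_n$; as $|A_n|=n!/2$, the claim $M_n=n!/2$ follows together with the easy upper bound already noted.

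This reduces the construction, in each coset, to finding a Hamilton path of $\A_{n-2}$ between two prescribed distinct vertices (with prescribed first and last entries), so the induction should be run for a suitable Hamilton-connectedness-type property $P(\A_m)$ rather than for bare Hamiltonicity --- either full Hamilton-connectedness, or at least enough of it to supply these paths and to reproduce $P$ in the inductive step. One then verifies: $P(\A_m)$ implies $\A_m$ is Hamiltonian (prepend the edge $v\to v\tau_3$ to a Hamilton path from $v\tau_3$ to $v$); $P(\A_m)\Rightarrow P(\A_{m+2})$, by the same coset construction run along a non-backtracking Eulerian \emph{trail} joining the two endpoint cosets, with extra care when both endpoints lie in a single coset; and the base case $P(\A_7)$ directly, with computer assistance if needed. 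Since $\A_5$ is not Hamiltonian ($M_5=57$) it cannot seed the recursion, which is why $n=5$ is excluded: with $m=3$ the relevant ``coset'' $\A_3$ is just a directed triangle, far from Hamilton-connected, and the construction for $\A_5$ breaks down exactly there.

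The step I expect to be hardest is pinning down $P$: it must be strong enough to feed the coset construction and to propagate through the induction, yet actually hold for $\A_m$ with $m\ge 7$ and be verifiable in the base case. This is the ``fundamental obstacle'' mentioned above: a less robust version of the argument reaches only $n(n-1)-1$ of the cosets and so falls a few permutations short --- precisely the $n-2$ of the Horovitz--Etzion construction --- and reaching all of $A_n$ needs the full strength of $P$. The remaining work is bookkeeping: existence of the required non-backtracking Eulerian circuits and trails, the degenerate configurations (coinciding, adjacent, or reversed endpoint cosets; the coincidence $v_j=u_j$), and verifying the base case.
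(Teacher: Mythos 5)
Your coset decomposition (the classes $A_n(a,b)$ determined by the last two entries, with $\tau_n$ carrying all inter-coset edges and the admissible transitions $(a,b)\to(c,a)$, $c\neq a,b$) is the same starting point as the paper, and reducing the cyclic ordering of cosets to a non-backtracking Eulerian circuit of the complete digraph on $[n]$ is legitimate bookkeeping. The genuine gap is the property you call $P$. Your construction needs, inside \emph{every} coset, a Hamilton path of $\A_{n-2}$ between two prescribed vertices (entry forced by the previous coset, exit forced by the next), i.e.\ essentially Hamilton-connectedness of $\A_{n-2}$; and your own inductive step needs more still, since when both prescribed endpoints of the desired path of $\A_n$ lie in one coset, that coset must be spanned by \emph{two} disjoint paths with four prescribed endpoints. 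You neither formulate $P$ precisely nor prove it for any $m$, and you flag it yourself as the hardest step. This is not routine: Hamilton-connectedness is strictly stronger than the Hamiltonicity being proved, it fails for $\A_3$ and for $\A_5$ (which is not even Hamiltonian, by \cref{rankin}), and nothing in the sketch excludes Rankin-type or other obstructions for particular endpoint pairs in $\A_7,\A_9,\dots$; even the base case $P(\A_7)$ is left to an unperformed computation over all ordered pairs of endpoints. As it stands the proposal is a program whose central lemma is missing, so it does not establish \cref{main}.

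It is worth seeing how the paper avoids ever needing prescribed-endpoint paths. Its inductive hypothesis \eqref{ind} is only that $\A_{n-2}$ has a Hamiltonian cycle containing at least one $\tau_{n-4}$-edge; every coset is filled with that \emph{same} cycle, merely cut open at a $\tau_{n-2}$- or $\tau_{n-4}$-edge, and cosets are joined three at a time by the $\tau_{n-2}\mapsto\tau_n$ substitution of \cref{qset3}, the pattern of joins being an acyclic connected hypergraph on $[n]^{(2)}$ (\cref{cactus2}). Because all such 3-fold linkages preserve the parity of the number of cycles (the barrier of \cref{parity}), a single 6-fold linkage coming from the order-2 relation \eqref{order2} (\cref{qset6}) is inserted to change the parity, and \cref{penultimate} guarantees a usable $\tau_{n-2}$-edge wherever a later linkage needs one. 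The entry and exit points of a coset are therefore never dictated in advance, which is exactly what allows the induction to close with such a weak hypothesis. If you wish to pursue your route, the Hamilton-connectedness-type statement (including its two-path variant) must be isolated and proved; that is the missing content, not a detail.
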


Besides being the first of optimal length, our cycle has a somewhat simpler
structure than those in \cite{horovitz-etzion,zhang-ge}.  It may in
principle be described via an explicit rule that specifies which generator
should immediately follow each permutation $\pi$, as a function of $\pi$.
(See \cite{holroyd-ruskey-williams,williams} for other Hamiltonian cycles of
Cayley graphs that can be described in this way). While the improvement from
$n!/2-n+2$ to $n!/2$ is in itself unlikely to be important for applications,
our methods are quite general, and it is hoped that they will prove useful
for related problems.

We briefly discuss even $n$.  Clearly, one approach is to simply leave the
last element of the permutation fixed, and use a cycle in $\A_{n-1}$, which
gives $M_n\geq M_{n-1}$ for even $n$. Horovitz and Etzion
\cite{horovitz-etzion} asked for a proof or disproof that this is optimal. In
fact, we expect that one can do much better.  We believe that $M_n\geq
(1-o(1)) n!/2$ asymptotically as $n\to\infty$ (an $n$-fold improvement over
$(n-1)!/2$), and perhaps even $M_n\geq n!/2-O(n^2)$. We will outline a
possible approach to showing bounds of this sort, although it appears that a
full proof for general even $n$ would be rather messy. When $n=6$ we use this
approach to show $M_6\geq 315=6!/2-45$, improving the bound $M_6\geq 57$ of
\cite{horovitz-etzion} by more than a factor of $5$.

Hamiltonian cycles of Cayley graphs have been extensively studied, although
general results are relatively few. See
e.g.~\cite{pak-radoicic,curran-gallian,witte-gallian,knuth-4-2} for surveys.
In particular, it is unknown whether every \emph{undirected} Cayley graph is
Hamiltonian. Our key construction (described in the next section) appears to
be novel in the context of this literature also.

Central to our proof are techniques having their origins in change ringing
(English-style church bell ringing).  Change ringing is also concerned with
self-avoiding cycles in Cayley graphs of permutations groups (with a
permutation representing an order in which bells are rung), and change
ringers discovered key aspects of group theory considerably before
mathematicians did -- see~e.g.\ \cite{white,thompson,griffiths,tintin}. As we
shall see, the fact that $\A_5$ has no Hamiltonian cycle (so that we have the
strict inequality $M_5<5!/2$) follows from a theorem of Rankin
\cite{rankin,swan} that was originally motivated by change ringing.

\section{Breaking the parity barrier}
\label{parity}

In this section we explain the key obstruction that frustrated the previous
attempts at a Hamiltonian cycle of $\A_n$ in
\cite{yehezkeally-schwartz,horovitz-etzion,zhang-ge}. We then explain how it
can be overcome.  We will then use these ideas to prove \cref{main} in
\cref{hypergraphs,cycle}.

By a \df{cycle cover} of a directed Cayley graph we mean a
set of self-avoiding directed cycles whose vertex sets
partition the vertex set of the graph. A cycle or a cycle
cover can be specified in several equivalent ways: we can
list the vertices or edges encountered by a cycle in order,
or we can specify a starting vertex of a cycle and list the
generators it uses in order, or we can specify which
generator immediately follows each vertex -- i.e.\ the
label of the unique outgoing edge that belongs to the cycle
or cycle cover. It will be useful to switch between these
alternative viewpoints.

A standard approach to constructing a Hamiltonian cycle is to start with a
cycle cover, and then successively make local modifications that unite
several cycles into one, until we have a single cycle.  (See
\cite{rapaport-strasser,pak-radoicic,holroyd-ruskey-williams,williams,griffiths,
yehezkeally-schwartz,horovitz-etzion,zhang-ge,curran-gallian,
compton-williamson,witte-gallian} for examples.) However, in $\A_n$ and many
other natural cases, there is a serious obstacle involving parity, as we
explain next.

The \df{order} $\order(g)$ of a group element $g$ is the smallest $t\geq 1$
such that $g^t=\id$, where $\id$ is the identity.  In our case, let
$\tau_k,\tau_\ell$ be two distinct generators of $\A_n$, and observe that
their ratio $\rho:=\tau_\ell\tau_k^{-1}$ is simply the permutation that jumps
element $\ell$ to position $k$ while shifting the intervening elements by
$1$. For example, when $n=9$ we have $\tau_9=[912345678]$ and
$\tau_7^{-1}=[234567189]$, so $\tau_9\tau_7^{-1}=[123456978]$ (element $9$
jumps first to position $1$ and then back to position $7$).  In general, the
ratio $\rho$ has order $q:=|k-\ell|+1$, which is odd.  In the example, $q=3$.

The fact that $\order(\rho)=q$ corresponds to the fact that in the Cayley
graph $\A_n$, starting from any vertex, there is a cycle of length $2q$
consisting of directed edges oriented in \emph{alternating} directions and
with alternating labels $\tau_\ell$ and $\tau_k$.  Consider one such
alternating cycle $Q$, and suppose that we have a cycle cover that includes
all $q$ of the $\tau_k$-edges of $Q$.  Consequently, it includes none of the
$\tau_\ell$-edges of $Q$ (since it must include only one outgoing edge from
each vertex). An example is the cycle cover that uses the outgoing
$\tau_k$-edge from every vertex of $\A_n$. Then we may modify the cycle cover as
follows: delete all the $\tau_k$-edges of $Q$, and
add all the $\tau_\ell$-edges of $Q$. This results in a new
cycle cover, because each vertex of the graph still has exactly one incoming
edge and one outgoing edge present.

Suppose moreover that all the $\tau_k$-edges of $Q$ lay in distinct
cycles in the original cycle cover.  Then the effect of the modification is
precisely to unite these $q$ cycles into one new cycle (having the same
vertices).  The new cycle alternately traverses the new $\tau_\ell$-edges and
the remaining parts of the $q$ original cycles.  All other cycles of the
cycle cover are unaffected.  See \cref{qset3} (left) for the case
$(k,\ell)=(n-2,n)$ (with $q=3$), and \cref{qset3} (right) for the
permutations at the vertices of the alternating cycle $Q$.

A modification of the above type reduces the total number of cycles in the
cycle cover by $q-1$, and therefore, since $q$ is odd, it does not change the
\emph{parity of the total number of cycles}. Less obviously, it turns out
that this parity is preserved by such a modification even if we relax the
assumption that the $q$ deleted edges lie in distinct cycles.  (See
\cite{rankin} or \cite{swan} for proofs.)
 This is a problem, because many cycle covers that one might
naturally start with have an \emph{even} number of cycles.  This holds in
particular for the cycle cover that uses a single generator $\tau_k$
everywhere (for $n\geq 5$), and also for the one that arises in an obvious
inductive approach to proving \cref{main} (comprising
$|A_n|/|A_{n-2}|=n(n-1)$ cycles each of length $|A_{n-2}|$). Thus we can
(apparently) never get to a Hamiltonian cycle (i.e.\ a cycle cover of one
cycle) by this method.
\begin{figure}
\centering
\begin{minipage}{.5\textwidth}
\centering
\begin{tikzpicture}[scale=.75]
\path[use as bounding box] (-3.1,-3.3) rectangle (3.1,2.6);
 \foreach \i in {0,...,5}
 { \draw (60*\i:1) node[circle,fill,inner sep=1.7pt] (\i){};};
 \draw[very thick,darkgreen] (120:1) node[circle, draw,inner sep=3pt]{};
 \draw (120:1) node[circle,fill=darkgreen,inner sep=1.7pt]{};
 \draw[very thick,midarrow,blue,dotted] (0) to node[above right=-4pt and -2pt] {$\tau_{n-2}$} (1);
 \draw[very thick,midarrow,blue,dotted] (2) to node[above left=-4pt and -2pt] {$\tau_{n-2}$} (3);
 \draw[very thick,midarrow,blue,dotted] (4) to node[below] {$\tau_{n-2}$} (5);
 \draw[ultra thick,midarrow] (0) to node[below right=-4pt and -2pt] {$\tau_n$} (5);
 \draw[ultra thick,midarrow] (2) to node[above] {$\tau_n$} (1);
 \draw[ultra thick,midarrow] (4) to node[below left=-4pt and -2pt] {$\tau_n$} (3);
 \draw[very thick,midarrow,blue] (1) to[bend left=135,looseness=10] (0);
 \draw[very thick,midarrow,blue] (3) to[bend left=135,looseness=10] (2);
 \draw[very thick,midarrow,blue] (5) to[bend left=135,looseness=10] (4);
\end{tikzpicture}
\end{minipage}%
\begin{minipage}{.4\textwidth}\centering
\centering
\begin{tikzpicture}
\boldmath
\newcommand{\bdot}{\boldsymbol\cdot}
  \matrix (T) [matrix of math nodes, nodes in empty cells,
  row 1/.style={nodes={text=darkgreen}},
  row 7/.style={nodes={text=darkgreen}},
  font=\fontsize{13}{13}\bfseries\selectfont,column sep={12pt,between origins},row
    sep={14pt,between origins}] {
   \bdot&\bdot&\bdot&\bdot&\bdot&\bdot & a & b & c \\
  c & \bdot&\bdot&\bdot&\bdot&\bdot&\bdot & a & b \\
   \bdot&\bdot&\bdot&\bdot&\bdot&\bdot & c & a & b \\
  b & \bdot&\bdot&\bdot&\bdot&\bdot&\bdot & c & a \\
   \bdot&\bdot&\bdot&\bdot&\bdot&\bdot & b & c & a \\
  a & \bdot&\bdot&\bdot&\bdot&\bdot&\bdot & b & c \\
   \bdot&\bdot&\bdot&\bdot&\bdot&\bdot & a & b & c \\
 };
  \draw (T-1-9.center) ..controls(T-1-9.south)and(T-2-1.north).. (T-2-1.center);
  \draw[blue,very thick,dotted] (T-2-1.center)..controls(T-2-1.south)and(T-3-7.north)..(T-3-7.center);
  \draw (T-3-9.center)..controls(T-3-9.south)and(T-4-1.north)..(T-4-1.center);
  \draw[blue,very thick,dotted] (T-4-1.center)..controls(T-4-1.south)and(T-5-7.north)..(T-5-7.center);
  \draw (T-5-9.center)..controls(T-5-9.south)and(T-6-1.north)..(T-6-1.center);
  \draw[blue,very thick,dotted] (T-6-1.center)..controls(T-6-1.south)and(T-7-7.north)..(T-7-7.center);
\end{tikzpicture}

\end{minipage}
 \caption{\emph{Left:} linking $3$ cycles by replacing generator $\tau_{n-2}$ with
 generator $\tau_n$ in $3$ places.  We start with the $3$ thin blue cycles,
 each of which comprises
 a dotted edge labeled with generator $\tau_{n-2}$, and a curved arc that
 represents the remaining part of the cycle.  We delete the dotted edges
 and replace them with the thick solid black edges (labelled $\tau_n$),
 to obtain one (solid) cycle, containing the same vertices as the original $3$ cycles.
 \emph{Right:} the permutations at the six vertices
 that are marked with solid discs in the left picture.
 The permutation at the (green) circled vertex is $[\ldots\ldots,a,b,c]$, where
 $a,b,c\in[n]$,
 and the permutations are listed in clockwise order around the inner hexagon
 starting and finishing there.
 The ellipsis $\cdots\cdots$ represents a sequence of $n-3$ distinct elements of
 $[n]$, the same sequence everywhere it occurs.
  A solid black curve indicates that the ratio between the
 two successive permutations is
 $\tau_n$ (so that an element jumps from position $n$ to $1$),
  while a dotted blue curve indicates $\tau_{n-2}^{-1}$ (with a jump from $1$ to $n-2$).
   } \label{qset3}
\end{figure}
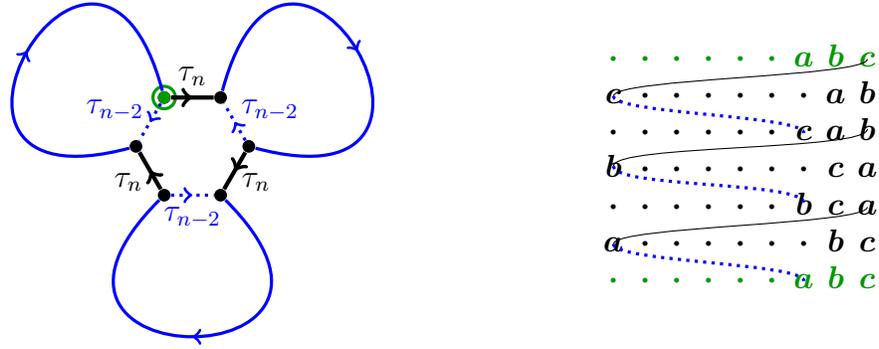

The above ideas in fact lead to the following rigorous condition for
non-existence of directed Hamiltonian cycles.  The result was proved by
Rankin \cite{rankin}, based on an 1886 proof by 
Thompson~\cite{thompson} of a special case arising in
change ringing; Swan \cite{swan} later gave a simpler
version of the proof.
\begin{thm}\label{rankin}
Consider the directed Cayley graph $\G$ of a finite group with two generators
$a,b$.  If $\order(ab^{-1})$ is odd and $|\G|/\order(a)$ is even, then $\G$ has
no directed Hamiltonian cycle.
\end{thm}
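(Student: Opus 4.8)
The plan is to translate the existence question into a parity computation for permutations of the vertex set. Write $G$ for the group, $N:=|G|$, and $\rho:=ab^{-1}$, $q:=\order(\rho)$; by hypothesis $q$ is odd and $N/\order(a)$ is even, and we may assume $a,b\neq\id$ so that $\G$ has no loops. Recall from the discussion above that a cycle cover of $\G$ amounts to a choice, for each vertex $v\in G$, of the label $f(v)\in\{a,b\}$ of its outgoing edge, subject to the single requirement that the successor map $\pi_f\colon v\mapsto vf(v)$ be a permutation of $G$; the cycles of the cover are exactly the orbits of $\pi_f$, and $\G$ has a directed Hamiltonian cycle if and only if some such $\pi_f$ is a single $N$-cycle. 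So it suffices to show that $\pi_f$ always has an even number of orbits.

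\emph{The key factorization.} Let $\alpha\colon v\mapsto va$ be right translation by $a$, and put $g:=\alpha^{-1}\pi_f$, so that $g(v)=vf(v)a^{-1}$. Since $\rho^{-1}a=(ba^{-1})a=b$, we get $g(v)=v$ when $f(v)=a$ and $g(v)=v\rho^{-1}$ when $f(v)=b$; thus $\pi_f=\alpha\circ g$, where $g$ fixes the set $A:=f^{-1}(a)$ pointwise and sends each $v\in B:=f^{-1}(b)$ to $v\rho^{-1}$. Because $\pi_f$ is a bijection of $G$, so is $g$; but $g$ maps $B$ onto $B\rho^{-1}$ and fixes $G\setminus B$ pointwise, so bijectivity forces $B\rho^{-1}=B$, i.e.\ $B$ is a union of left cosets of $\langle\rho\rangle$. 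On each such coset $v\langle\rho\rangle$ (which has size $q$) the map $g$ acts as $x\mapsto x\rho^{-1}$, a single $q$-cycle. Hence $g$ is a product of disjoint $q$-cycles, and since $q$ is odd, $\operatorname{sgn}(g)=+1$. Therefore $\operatorname{sgn}(\pi_f)=\operatorname{sgn}(\alpha)$ --- the \emph{same} value for every cycle cover of $\G$.

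\emph{Conclusion by counting orbits.} For a permutation of an $N$-element set with $c$ orbits one has $\operatorname{sgn}=(-1)^{N-c}$. Applying this to $\pi_f$ and to $\alpha$ --- the latter being a product of $N/\order(a)$ disjoint $\order(a)$-cycles, hence having exactly $N/\order(a)$ orbits --- and comparing signs gives $c(\pi_f)\equiv N/\order(a)\pmod 2$ for every cycle cover of $\G$. By hypothesis the right-hand side is even, so every cycle cover of $\G$ consists of an even number of cycles. In particular no cycle cover of $\G$ consists of a single cycle, i.e.\ $\G$ has no directed Hamiltonian cycle.

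The only genuine idea here is the change of variables $\pi_f=\alpha\circ g$ together with the observation that bijectivity of $\pi_f$ pins $g$ down to a product of $\order(\rho)$-cycles, which contributes trivial sign precisely because $\order(\rho)$ is odd; I expect this to be the step that takes some thought. The orbit-counting in the last paragraph is then routine, as is the harmless remark that $a,b\neq\id$ keeps $\G$ loop-free so that the orbit counts above are honest.
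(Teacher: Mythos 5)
Your proof is correct. There is nothing internal to compare it against: the paper does not prove \cref{rankin} but quotes it with references to Rankin \cite{rankin} and Swan \cite{swan} (``See \cite{rankin} or \cite{swan} for proofs''). Your argument --- encode a cycle cover by its successor permutation $\pi_f(v)=vf(v)$, factor $\pi_f=\alpha\circ g$ with $\alpha$ the right translation by $a$, observe that bijectivity of $\pi_f$ forces the support $B=f^{-1}(b)$ to be a union of left cosets of $\langle ab^{-1}\rangle$ so that $g$ is a product of disjoint cycles of odd length $\order(ab^{-1})$ and hence even, and then compare signs via $\operatorname{sgn}=(-1)^{N-c}$ to get $c(\pi_f)\equiv |\G|/\order(a)\pmod 2$ --- is sound at every step, and it is essentially the sign/parity argument of Swan's short proof that the paper cites. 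Indeed it proves somewhat more than the theorem as stated: it shows that \emph{every} cycle cover of $\G$ has a number of cycles of the same parity as $|\G|/\order(a)$, which is exactly the invariance fact invoked informally in \cref{parity} (``this parity is preserved by such a modification even if we relax the assumption that the $q$ deleted edges lie in distinct cycles''), with non-Hamiltonicity following because $1$ is odd. Your reduction to $a,b\neq\id$ is indeed harmless, and in fact unnecessary for the conclusion: if $b=\id$ then $a$ generates the group and $|\G|/\order(a)=1$ is odd, while if $a=\id$ then $\order(ab^{-1})=|\G|=|\G|/\order(a)$ cannot be simultaneously odd and even, so the hypotheses fail in either degenerate case; and in any case a Hamiltonian cycle on at least two vertices uses no loops, so the sign computation applies directly to it.
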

An immediate consequence is that $\A_5$ has no directed Hamiltonian cycle
(confirming the computer search result of \cite{horovitz-etzion}), and indeed
$\A_n$ has no directed Hamiltonian cycle using only two generators for odd
$n\geq 5$.

\begin{figure}
\centering
\begin{minipage}{.55\textwidth}\centering
\begin{tikzpicture}[scale=1.3]
\path[use as bounding box] (-2.3,-2.3) rectangle (2.3,2.3);
 \foreach \i in {0,...,11}
 { \draw (-15+30*\i:1) node[circle,fill,inner sep=1.5pt] (\i){};};
 \draw[very thick,darkgreen] (45:1) node[circle, draw,inner sep=3pt]{};
 \draw (45:1) node[circle,fill=darkgreen,inner sep=1.7pt]{};

 \draw[very thick,midarrow,blue,dotted] (0) to node[right] {$\tau_{n-2}$} (1);
 \draw[very thick,midarrow,red,dotted] (2) to node[above right=-1pt and -4pt] {$\tau_{n-4}$} (3);
 \draw[very thick,midarrow,red,dotted] (4) to node[above left=-1pt and -4pt] {$\tau_{n-4}$} (5);
 \draw[very thick,midarrow,blue,dotted] (6) to node[left] {$\tau_{n-2}$} (7);
 \draw[very thick,midarrow,red,dotted] (8) to node[below left=-1pt and -4pt] {$\tau_{n-4}$} (9);
 \draw[very thick,midarrow,red,dotted] (10) to node[below right=-1pt and -4pt] {$\tau_{n-4}$} (11);

 \draw[ultra thick,midarrow] (0) to node[above left=-4pt and -1pt] {$\tau_n$} (11);
 \draw[ultra thick,midarrow] (2) to node[below left=-4pt and -1pt] {$\tau_n$} (1);
 \draw[ultra thick,midarrow] (4) to node[below=1pt] {$\tau_n$} (3);
 \draw[ultra thick,midarrow] (6) to node[below right=-4pt and -1pt] {$\tau_n$} (5);
 \draw[ultra thick,midarrow] (8) to node[above right=-4pt and -1pt] {$\tau_n$} (7);
 \draw[ultra thick,midarrow] (10) to node[above=1pt] {$\tau_n$} (9);

 \draw[very thick,midarrow,blue] (1) to[bend left=135,looseness=10] (0);
 \draw[very thick,midarrow,red] (3) to[bend left=135,looseness=10] (2);
 \draw[very thick,midarrow,red] (5) to[bend left=135,looseness=10] (4);
 \draw[very thick,midarrow,blue] (7) to[bend left=135,looseness=10] (6);
 \draw[very thick,midarrow,red] (9) to[bend left=135,looseness=10] (8);
 \draw[very thick,midarrow,red] (11) to[bend left=135,looseness=10] (10);
\end{tikzpicture}
\end{minipage}%
\begin{minipage}{.4\textwidth}\centering
\begin{tikzpicture}
\boldmath
\newcommand{\bdot}{\boldsymbol\cdot}
  \matrix (T) [matrix of math nodes,nodes in empty cells,
  font=\fontsize{13}{13}\selectfont,
  row 1/.style={nodes={text=darkgreen}},
  row 13/.style={nodes={text=darkgreen}},
  column sep={13pt,between origins},row
    sep={13pt,between origins}] {
  \bdot&\bdot&\bdot&\bdot& a & b & c & d & e \\
  e & \bdot&\bdot&\bdot&\bdot & a & b & c & d \\
  \bdot&\bdot&\bdot&\bdot & a & b & e & c & d \\
  d & \bdot&\bdot&\bdot&\bdot & a & b & e & c \\
  \bdot&\bdot&\bdot&\bdot & d & a & b & e & c \\
  c & \bdot&\bdot&\bdot&\bdot & d & a & b & e \\
  \bdot&\bdot&\bdot&\bdot & c & d & a & b & e \\
  e & \bdot&\bdot&\bdot&\bdot & c & d & a & b \\
  \bdot&\bdot&\bdot&\bdot & c & d & e & a & b \\
  b & \bdot&\bdot&\bdot&\bdot & c & d & e & a \\
  \bdot&\bdot&\bdot&\bdot & b & c & d & e & a \\
  a & \bdot&\bdot&\bdot&\bdot & b & c & d & e \\
  \bdot&\bdot&\bdot&\bdot & a & b & c & d & e \\
 };
  \draw (T-1-9.center) ..controls(T-1-9.south)and(T-2-1.north).. (T-2-1.center);
  \draw[blue,very thick,dotted] (T-2-1.center)..controls(T-2-1.south)and(T-3-7.north)..(T-3-7.center);
  \draw (T-3-9.center)..controls(T-3-9.south)and(T-4-1.north)..(T-4-1.center);
  \draw[red,very thick,dotted] (T-4-1.center)..controls(T-4-1.south)and(T-5-5.north)..(T-5-5.center);
  \draw (T-5-9.center)..controls(T-5-9.south)and(T-6-1.north)..(T-6-1.center);
  \draw[red,very thick,dotted] (T-6-1.center)..controls(T-6-1.south)and(T-7-5.north)..(T-7-5.center);
  \draw (T-7-9.center)..controls(T-7-9.south)and(T-8-1.north)..(T-8-1.center);
  \draw[blue,very thick,dotted] (T-8-1.center)..controls(T-8-1.south)and(T-9-7.north)..(T-9-7.center);
  \draw (T-9-9.center)..controls(T-9-9.south)and(T-10-1.north)..(T-10-1.center);
  \draw[red,very thick,dotted] (T-10-1.center)..controls(T-10-1.south)and(T-11-5.north)..(T-11-5.center);
  \draw (T-11-9.center)..controls(T-11-9.south)and(T-12-1.north)..(T-12-1.center);
  \draw[red,very thick,dotted] (T-12-1.center)..controls(T-12-1.south)and(T-13-5.north)..(T-13-5.center);
\end{tikzpicture}
\end{minipage}
 \caption{The key construction.  \emph{Left:} replacing a suitable combination of
 generators $\tau_{n-2}$ and $\tau_{n-4}$ with $\tau_n$ links $6$ cycles into one,
 breaking the parity barrier.  We start with the $2$ blue and $4$ red thin cycles, and
 replace the dotted edges with the thick black solid edges to obtain the solid
 cycle.
 \emph{Right:} the permutations appearing
  at the vertices marked with solid discs, listed in clockwise order
  starting and ending at the
  circled vertex, which is $[{.}\,{.}\,{.}\,{.}\, ,a,b,c,d,e]$.
  The ellipsis ${\cdot}\,{\cdot}\,{\cdot}\,{\cdot}$
  represents the same sequence everywhere it occurs.} \label{qset6}
\end{figure}
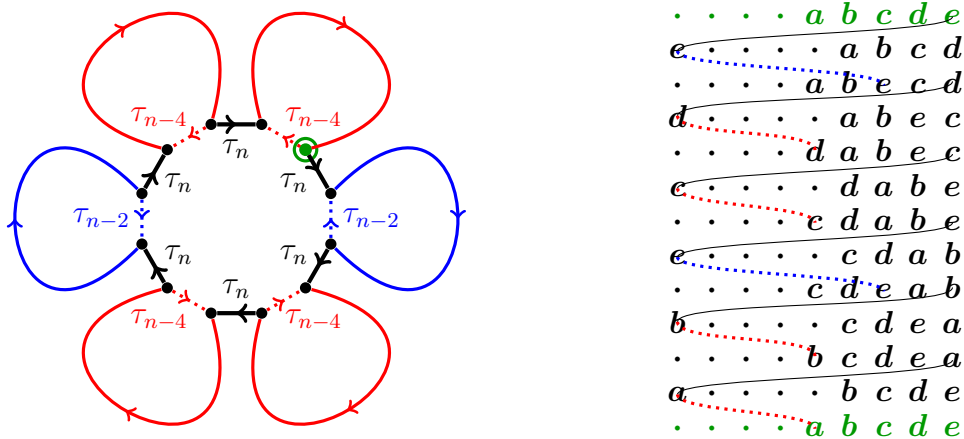
%
%
To break the parity barrier, we must use at least three generators in a
fundamental way.  The problem with the previous approach was that
$\order(\tau_\ell\tau_k^{-1})$ is odd: we need an analogous relation
involving composition of an \emph{even} number of ratios of two generators.
In terms of the graph $\A_n$, we need a cycle of length a multiple of $4$
whose edges are oriented in alternating directions.  It is clear that such a
thing must exist for all odd $n\geq 7$, because the ratios
$\tau_k\tau_\ell^{-1}$ generate the alternating group on the $n-2$ elements
$\{3,\ldots,n\}$, which contains elements of even order.  We will use the
example:
\begin{equation}
\order\bigl(\zeta\bigr)=2,\quad\text{where }
\zeta:= \tau_n\tau_{n-2}^{-1}\tau_n\tau_{n-4}^{-1}\tau_n\tau_{n-4}^{-1}.
\label{order2}
\end{equation}
It is a routine matter to check \eqref{order2}: the ratio
$\tau_n\tau_{n-s}^{-1}$ is the permutation that jumps an element from
position $n$ to $n-s$ (while fixing $1,\ldots,n-s-1$
and shifting $n-s,\ldots,n-1$ right one place), so to compute the
composition $\zeta$ of three such ratios we need only keep track of the last
$5$ elements.  \cref{qset6} (right) shows the explicit computation: starting
from an arbitrary permutation $\pi=[\ldots,a,b,c,d,e]\in A_n$, the successive
compositions
$\pi,\pi\tau_n,\pi\tau_n\tau_{n-2}^{-1},\pi\tau_n\tau_{n-2}^{-1}\tau_n,\ldots,
\pi\zeta^2=\pi$ are listed -- the ellipsis
${\cdot}\,{\cdot}\,{\cdot}\,{\cdot}$ represents the same sequence everywhere
it occurs. This explicit listing of the relevant permutations will be useful
later.

We can use the above observation to link $6$ cycles into one, as shown in
\cref{qset6} (left). Let $Q'$ be a length-$12$ cycle in $\A_n$ with edges in
alternating orientations that corresponds to the identity \eqref{order2}.
That is to say, every alternate edge in $Q'$ has label $\tau_n$, and is
oriented in the same direction around $Q'$.  The other $6$ edges are oriented
in the opposite direction, and have successive labels
$\tau_{n-2},\tau_{n-4},\tau_{n-4},\tau_{n-2},\tau_{n-4},\tau_{n-4}$.
Suppose that we start
with a cycle cover in which the two $\tau_{n-2}$-edges and the four
$\tau_{n-4}$-edges of $Q'$ all lie in distinct cycles.  Then we can delete
these $6$ edges and replace them with the six $\tau_n$-edges of $Q'$.  This
results in a new cycle cover in which these $6$ cycles have been united into
one, thus reducing the number of cycles by $5$ and changing its parity.  See \cref{qset6} (left) -- the old cycles are in thin red and blue, while the new cycle is shown by
solid lines and arcs.

%

We will prove \cref{main} by induction.  The inductive step will use one
instance of the above $6$-fold linkage to break the parity barrier, together
with many instances of the simpler $3$-fold linkage described earlier with
$(k,\ell)=(n-2,n)$. The base case $n=7$ will use the $6$-fold linkage in the
reverse direction (replacing six $\tau_n$-edges with
$\tau_{n-2},\tau_{n-4},\ldots$), together with the cases
$(k,\ell)=(7,5),(7,3)$ of the earlier linkage.

\section{Hypergraph spanning}
\label{hypergraphs}

The other main ingredient for our proof is a systematic way
of organizing the various linkages.  For this the language
of hypergraphs will be convenient.  Similar hypergraph
constructions were used in \cite{horovitz-etzion,zhang-ge}.
A \df{hypergraph} $(V,H)$ consists of a vertex set $V$ and
a set $H$ of nonempty subsets of $V$, which are called
\df{hyperedges}. A hyperedge of size $r$ is called an
$r$-hyperedge.

The \df{incidence graph} of a hypergraph $(V,H)$ is the bipartite graph with
vertex set  $V\cup H$, and with an edge between $v\in V$ and $h\in H$ if
$v\in h$. A \df{component} of a hypergraph is a component of its incidence
graph, and a hypergraph is \df{connected} if it has one component.  We say
that a hypergraph is \df{acyclic} if its incidence graph is acyclic.  Note
that this a rather strong condition: for example, if two distinct hyperedges
$h$ and $h'$ share two distinct vertices $v$ and $v'$ then the hypergraph is
not acyclic. (Several non-equivalent notions of acyclicity for hypergraphs
have been considered -- the notion we use here is sometimes called
Berge-acyclicity -- see e.g.~\cite{fagin}).

We are interested in hypergraphs of a particular kind that are related to the
linkages considered in the previous section. Let $[n]^{(k)}$ be the set of
all $n!/(n-k)!$ ordered $k$-tuples of distinct elements of $[n]$.
If $t=(a,b,c)\in [n]^{(3)}$ is a triple, define the \df{triangle}
$\T(t)=\T(a,b,c):=\{(a,b),(b,c),(c,a)\}\subset[n]^{(2)}$ of pairs that
respect the cyclic order.  (Note that $\T(a,b,c)=\T(c,a,b)\neq \T(c,b,a)$.)
In our application to Hamiltonian cycles, $\T(a,b,c)$ will encode precisely the linkage
of $3$ cycles shown in \cref{qset3}.  The following fact and its proof are
illustrated in \cref{cactus9}.

%
%
%
%
%
%
%
%
%
\begin{figure}
\centering
\includegraphics[width=\textwidth]{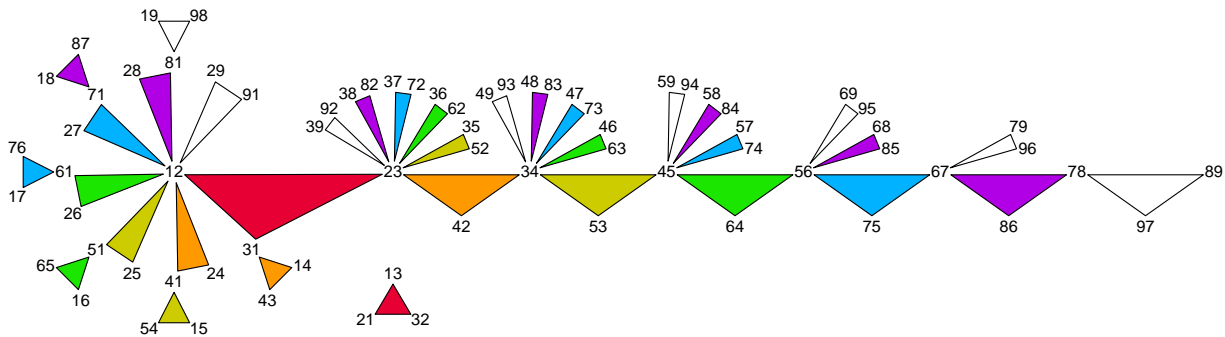}
\caption{The hypergraph of \cref{cactus}, when $n=9$.  The vertices are all the
ordered pairs $(a,b)=ab\in[n]^{(2)}$, and the hyperedges are triangles of the form $\{ab,bc,ca\}$.
Hyperedges are colored according to the step of the induction at which they are added.
In the last step from $n=8$ to $n=9$, all the white hyperedges are added,
i.e.\ those incident to vertices that contain $9$.
}
 \label{cactus9}
\end{figure}
\begin{prop}\label{cactus}
Let $n\geq 3$.  There exists an acyclic hypergraph with vertex set
$[n]^{(2)}$, with all hyperedges being triangles $\T(t)$ for $t\in
[n]^{(3)}$, and with exactly two components: one containing precisely the $3$
vertices of $\T(3,2,1)$, and the other containing all other vertices.
\end{prop}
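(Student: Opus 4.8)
The plan is to induct on $n$. An acyclic hypergraph on $[n]^{(2)}$ whose hyperedges are all triangles and which has exactly two components must (counting edges in its incidence graph, which is then a forest) have exactly $\binom n2-1$ hyperedges; so the step from $n-1$ to $n$ should adjoin exactly $n-1$ new triangles, linking in all $2(n-1)$ pairs containing $n$ while leaving the small component untouched (compare \cref{cactus9}). For the base case $n=3$ I take the hypergraph whose two hyperedges are the triangles $\T(1,2,3)=\{(1,2),(2,3),(3,1)\}$ and $\T(3,2,1)=\{(3,2),(2,1),(1,3)\}$: its incidence graph is a disjoint union of two copies of $K_{1,3}$, hence acyclic, and these copies are exactly the two required components.

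For the inductive step, suppose $n\geq 4$ and that $(\,[n-1]^{(2)},H_{n-1})$ is acyclic with the two required components, the small one having vertex set $\{(3,2),(2,1),(1,3)\}$ and the large one, $C'$, having vertex set $[n-1]^{(2)}\setminus\{(3,2),(2,1),(1,3)\}$. Let $\sigma$ be the cyclic shift of $[n-1]$, so $\sigma(i)=i+1$ for $1\leq i\leq n-2$ and $\sigma(n-1)=1$, and set
\[
H_n\;:=\;H_{n-1}\;\cup\;\bigl\{\T\bigl(n,i,\sigma(i)\bigr):i\in[n-1]\bigr\}.
\]
Since $\sigma$ has no fixed point and $i,\sigma(i)\in[n-1]$, the entries $n,i,\sigma(i)$ are distinct, so each adjoined set is genuinely a triangle $\T(t)$ with $t\in[n]^{(3)}$, its three vertices being $(n,i)$, $(i,\sigma(i))$ and $(\sigma(i),n)$. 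As $i$ runs over $[n-1]$, the pairs $(n,i)$ enumerate, once each, all elements of $[n]^{(2)}\setminus[n-1]^{(2)}$ with $n$ in the first coordinate, and — because $\sigma$ is a bijection — the pairs $(\sigma(i),n)$ enumerate, once each, all such elements with $n$ in the second coordinate. Hence $H_n$ has vertex set exactly $[n]^{(2)}$, and every new vertex lies in exactly one new hyperedge.

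It remains to verify acyclicity and the component structure. Distinct new hyperedges share no vertex: for $i\neq j$ the first coordinates $i,j$ differ, the second coordinates $\sigma(i),\sigma(j)$ differ, and a pair containing $n$ cannot equal a pair of elements of $[n-1]$; moreover each new hyperedge meets the old vertex set only in the single vertex $(i,\sigma(i))$. Therefore the incidence graph of $H_n$ is obtained from that of $H_{n-1}$ by, for each $i$, adjoining a node for $\T(n,i,\sigma(i))$ joined to the one existing node $(i,\sigma(i))$ and then hanging two fresh leaves, $(n,i)$ and $(\sigma(i),n)$, off it, all adjoined nodes being distinct. Since each of these operations adds a new vertex together with a single new edge, no cycle is ever created, so the incidence graph of $H_n$ is again a forest. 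Finally, the old vertices touched by the new hyperedges are $(1,2),(2,3),\dots,(n-2,n-1)$ and $(n-1,1)$; none equals $(3,2)$, $(2,1)$ or $(1,3)$ because $\sigma(1)\neq 3$, $\sigma(2)\neq 1$, $\sigma(3)\neq 2$. Hence every new hyperedge attaches to $C'$, the small component is unchanged (still a component with vertex set precisely $\{(3,2),(2,1),(1,3)\}$), and $C'$ together with all new vertices and hyperedges forms the one remaining component.

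There is no genuinely hard step: the only real design choice is $\sigma$, and what is needed of it is just (i) that $i\mapsto\sigma(i)$ be a fixed-point-free bijection of $[n-1]$ — this is exactly what makes the $n-1$ new triangles be valid and cover the $2(n-1)$ new vertices without overlap — and (ii) that $\sigma(1)\neq 3$, $\sigma(2)\neq 1$, $\sigma(3)\neq 2$, which is exactly what keeps the small component isolated. The cyclic shift satisfies both for every $n\geq 4$, so no case analysis is required; acyclicity then follows automatically from the pendant way in which the new pieces attach. The part most prone to error is keeping straight which vertices are old versus new and checking that the three excluded pairs never arise as attachment points — hence the explicit verification above.
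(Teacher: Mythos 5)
Your proof is correct and is essentially the paper's own argument: the same base case $n=3$, and the same inductive step adjoining the $n-1$ triangles $\T(n,i,\sigma(i))=\T(i,i^+,n)$ built from the cyclic shift of $[n-1]$, each meeting the old hypergraph in the single vertex $(i,i^+)$ and carrying two new pendant vertices $(n,i)$ and $(i^+,n)$. Your explicit checks of acyclicity and of why the attachment points avoid the small component $\T(3,2,1)$ are just a more detailed write-up of the verification the paper leaves brief.
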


\begin{proof}
We give an explicit inductive construction.  When $n=3$ we simply take as
hyperedges the two triangles $\T(3,2,1)$ and $\T(1,2,3)$.

Now let $n\geq 4$, and assume that $({[n-1]}^{(2)},H)$ is a hypergraph
satisfying the given conditions for $n-1$.  Consider the larger hypergraph
$([n]^{(2)},H)$ with the same set of hyperedges, and note that its components
are precisely: (i) $\T(3,2,1)$; (ii) an acyclic component which we denote $K$
that contains all vertices of $[n-1]^{(2)}\setminus\T(3,2,1)$; and (iii) the
$2n-2$ isolated vertices $\{(i,n),(n,i):i\in[n-1]\}$.

We will add some further hyperedges to $([n]^{(2)},H)$.  For $i\in[n-1]$,
write $i^+$ for the integer in $[n-1]$ that satisfies $i^+\equiv (i+1)
\bmod{(n-1)}$, and define
\begin{align*}
D:=\bigl\{&\Delta(i,i^+,n):i\in [n-1]\bigr\}\\
=\bigl\{&\T(1,2,n),\T(2,3,n),\ldots,\T(n-2,n-1,n),\;\T(n-1,1,n)\bigr\}.
\end{align*}
Any element $\Delta(i,i^+,n)$ of $D$ has $3$ vertices.  One of them, $(i,i^+)$, lies in
$K$, while the others, $(i^+,n)$ and $(n,i)$, are isolated vertices of $([n]^{(2)},H)$.
Moreover, each isolated vertex of $([n]^{(2)},H)$ appears in exactly one
hyperedge in $D$.  Therefore, $([n]^{(2)},H\cup D)$ has all the claimed
properties.
\end{proof}

We remark that the above hypergraph admits a simple (non-inductive)
description -- it consists of all $\T(a,b,c)$ such that $\max\{a,b\}<c$ and
$b\equiv (a+1) \bmod (c-1)$.

In order to link cycles into a Hamiltonian cycle we will require a \emph{connected} hypergraph. For $n\geq 3$ there is no connected acyclic hypergraph of triangles
with vertex set $[n]^{(2)}$.  (This follows from parity considerations: an
acyclic component composed of $m$ triangles has $1+2m$ vertices, but
$|[n]^{(2)}|$ is even.)  Instead, we simply introduce a larger hyperedge, as
follows.

\begin{samepage}
\begin{cor}\label{cactus2}
Let $n\geq 5$ and let $a,b,c,d,e\in[n]$ be distinct.  There exists a
connected acyclic hypergraph with vertex set $[n]^{(2)}$ such that one
hyperedge is the $6$-hyperedge $\T(a,b,e)\cup \T(c,d,e)$, and all others are
triangles $\T(t)$ for $t\in [n]^{(3)}$.
\end{cor}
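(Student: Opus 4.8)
The plan is to reduce, by a relabelling argument, to a single convenient choice of the five elements, and then to perform a small amount of ``surgery'' on the hypergraph supplied by \cref{cactus}. First I would note that if $\phi$ is any permutation of $[n]$, then applying $\phi$ coordinatewise to $[n]^{(2)}$ carries triangles to triangles, carries a hyperedge $\T(a,b,e)\cup\T(c,d,e)$ to $\T(\phi a,\phi b,\phi e)\cup\T(\phi c,\phi d,\phi e)$, and induces an isomorphism of incidence graphs, hence preserves connectedness and acyclicity. Since any two ordered $5$-tuples of distinct elements of $[n]$ are related by such a $\phi$, it suffices to prove the statement for the single choice $(a,b,c,d,e)=(5,4,3,2,1)$; write $f:=\T(5,4,1)\cup\T(3,2,1)=\{(5,4),(4,1),(1,5),(3,2),(2,1),(1,3)\}$ for the resulting $6$-hyperedge.

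Next I would take the hypergraph $\mathcal H$ on $[n]^{(2)}$ provided by \cref{cactus}, whose two components are $\T(3,2,1)=\{(3,2),(2,1),(1,3)\}$ and a big component $K$ containing every other vertex. The key point is that $K$ already contains the triangle $\T(4,1,5)=\{(4,1),(1,5),(5,4)\}$ as a hyperedge: by the explicit description of this hypergraph given after \cref{cactus}, $\T(4,1,5)$ is a hyperedge (taking $(a,b,c)=(4,1,5)$, legitimate since $\max\{4,1\}<5$ and $1\equiv 4+1 \pmod 4$, using $n\geq5$), and its three vertices are disjoint from $\{(3,2),(2,1),(1,3)\}$, so it lies in $K$. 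I would then form $\mathcal H'$ by \emph{deleting} the two hyperedges $\T(3,2,1)$ and $\T(4,1,5)$ from $\mathcal H$ and \emph{adding} the $6$-hyperedge $f$. Every hyperedge of $\mathcal H'$ other than $f$ is a triangle $\T(t)$ with $t\in[n]^{(3)}$, and the vertex set is still $[n]^{(2)}$, so it remains to verify that $\mathcal H'$ is connected and acyclic.

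For this I would argue in the incidence graph, which for $\mathcal H$ is a forest made of the incidence trees of $\T(3,2,1)$ and of $K$. Deleting the hyperedge $\T(3,2,1)$ dissolves its component into the three isolated vertices $(3,2),(2,1),(1,3)$. Deleting the hyperedge-node $\T(4,1,5)$, which has degree $3$ in the tree $K$, splits $K$ into exactly three subtrees, one containing each of $(5,4),(4,1),(1,5)$. Thus after the two deletions the incidence graph is a forest with exactly six components, one containing each of the six vertices of $f$, together exhausting $[n]^{(2)}$. Adding the hyperedge-node $f$, which is adjacent to precisely one vertex in each of these six components, joins the six trees into a single tree; hence the incidence graph of $\mathcal H'$ is a tree, so $\mathcal H'$ is connected and acyclic. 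Undoing the relabelling $\phi$ then gives the statement for general distinct $a,b,c,d,e$.

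The one real subtlety --- and the thing I would be most careful about --- is the choice of which configuration to specialize to. The naive idea of relabelling so that the \emph{small} component of \cref{cactus} becomes $\T(c,d,e)$ is unhelpful: the three vertices $(a,b),(b,e),(e,a)$ then lie in the big component with no reason for $\T(a,b,e)$ to be a hyperedge there, and trying to re-wire the big component to force this changes the number of triangles and wrecks the acyclicity bookkeeping. The point is that for the specific labelling $(5,4,3,2,1)$ the hypergraph of \cref{cactus} already has \emph{both} $\T(3,2,1)$ as a component \emph{and} $\T(5,4,1)=\T(4,1,5)$ as a hyperedge of the big component (indeed it is one of the fan triangles $\T(i,i^+,n)$ added at the step from $4$ to $5$ of the construction in the proof of \cref{cactus}), which is exactly what makes the deletion-and-addition go through. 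The remaining checks --- that the six displayed pairs are distinct so that $f$ is genuinely a $6$-set, that removing a degree-$3$ node of a tree produces exactly three components, and that a new degree-$6$ node meeting one vertex of each of six trees produces a tree --- are routine.
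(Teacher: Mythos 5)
Your proposal is correct and follows essentially the same route as the paper: reduce by relabelling to one specific $5$-tuple, then take the hypergraph of \cref{cactus} and unite its small component $\T(3,2,1)$ with a triangle already present in the big component to form the $6$-hyperedge. The only (immaterial) difference is that you merge with $\T(4,1,5)$ via the tuple $(5,4,3,2,1)$ while the paper merges with $\T(3,4,5)$ via $(2,1,4,5,3)$; your explicit incidence-tree verification just spells out what the paper leaves as an easy check.
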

\end{samepage}

\begin{proof}
By symmetry, it is enough to prove this for any one choice of $(a,b,c,d,e)$;
we choose $(2,1,4,5,3)$.  The result follows from \cref{cactus}, on noting
that $\T(3,4,5)=\T(4,5,3)$ is a hyperedge of the hypergraph constructed
there: we simply unite it with $\T(3,2,1)=\T(2,1,3)$ to form the
$6$-hyperedge.
\end{proof}

\section{The Hamiltonian cycle}
\label{cycle}

We now prove \cref{main} by induction on (odd) $n$. We give
the inductive step first, followed by the base case $n=7$.
The following simple observation will be used in the
inductive step.

\begin{lemma}\label{penultimate}
Let $n\geq 3$ be odd, and consider any Hamiltonian cycle of $\A_n$.  For
every $i\in[n]$ there exists a permutation $\pi\in A_n$ with $\pi(n)=i$ that
is immediately followed by a $\tau_n$-edge in the cycle.
\end{lemma}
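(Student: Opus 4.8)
The plan is to argue by a counting/parity contradiction. Fix an odd $n\geq 3$, fix a Hamiltonian cycle $C$ of $\A_n$, and fix $i\in[n]$. Let $B_i:=\{\pi\in A_n:\pi(n)=i\}$; this is a set of $(n-1)!/2$ permutations, which is a coset of the subgroup of even permutations fixing position $n$. The key structural fact is how the generators act on the last coordinate: applying $\tau_n$ to $\pi$ moves the element $\pi(n)$ from position $n$ to position $1$, so $(\pi\tau_n)(n)=\pi(n-1)$, whereas applying any generator $\tau_k$ with $k$ odd and $k\leq n-2$ leaves position $n$ untouched, so $(\pi\tau_k)(n)=\pi(n)$. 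Thus along the Hamiltonian cycle $C$, the value of the $n$th coordinate stays constant along every $\tau_k$-edge with $k\leq n-2$, and can only change when we traverse a $\tau_n$-edge. In other words, if we contract all non-$\tau_n$ edges of $C$, the cyclic sequence of $n$th-coordinate values that we see is exactly the sequence of values $\pi(n)$ recorded at the tails of the $\tau_n$-edges, read in cyclic order around $C$.

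Now suppose, for contradiction, that no $\pi\in B_i$ is immediately followed by a $\tau_n$-edge in $C$. Equivalently, every $\tau_n$-edge of $C$ has its tail outside $B_i$. Then $C$ restricted to $B_i$ is an arc-disjoint union of directed paths, each of which is a maximal run of vertices of $B_i$ consecutive along $C$; the $\tau_n$-edges of $C$ that leave $B_i$ are precisely the edges of $C$ emanating from the final vertex of each such run. But here is the contradiction: since $C$ is a single cycle visiting all of $A_n$, and $B_i\neq\emptyset$ and $B_i\neq A_n$ (using $n\geq 3$, so $A_n$ also contains permutations with $\pi(n)\neq i$), there is at least one maximal run of $B_i$-vertices, and the edge of $C$ leaving the last vertex of that run goes to a vertex not in $B_i$. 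That edge changes the $n$th coordinate, hence by the previous paragraph it must be a $\tau_n$-edge, and its tail lies in $B_i$ — contradicting our assumption. Therefore some $\pi\in B_i$ is immediately followed by a $\tau_n$-edge.

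I expect the only subtlety — really a triviality once set up — is making sure the dichotomy "$\tau_n$ moves position $n$, every other generator fixes it" is stated cleanly and that $B_i$ is both nonempty and a proper subset, which needs $n\geq 3$ (for $n=1$ there is nothing to prove, and the statement is only claimed for odd $n\geq 3$). No linkage machinery, hypergraphs, or the results of \cref{cactus,cactus2} are needed; this is a self-contained observation about how the generators act on the final coordinate, exactly as the lemma's placement ("simple observation") suggests. The argument also immediately gives a slightly stronger fact if wanted: for each $i$, the number of $\tau_n$-edges of $C$ with tail in $B_i$ equals the number with head in $B_i$, and both are at least $1$.
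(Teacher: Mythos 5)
Your proof is correct and is essentially the paper's argument: since $\tau_n$ is the only generator that alters the last coordinate and the Hamiltonian cycle must leave the nonempty proper set $\{\pi\in A_n:\pi(n)=i\}$ somewhere, the exiting edge is a $\tau_n$-edge with tail in that set. The paper states this directly rather than by contradiction, but the content is identical.
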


\begin{proof}
Since the cycle visits all permutations of $A_n$, it must contain a directed
edge from a permutation $\pi$ satisfying $\pi(n)=i$ to a permutation $\pi'$
satisfying $\pi'(n)\neq i$.  This is a $\tau_n$-edge, since any other
generator would fix the rightmost element.
\end{proof}

\begin{proof}[Proof of \cref{main}, inductive step]
We will prove by induction on odd $n\geq 7$ the statement:
\begin{equation}
    \text{\em there exists a Hamiltonian cycle of $\A_n$
    that includes at least one
    $\tau_{n-2}$-edge.}\label{ind}
\end{equation}
 As mentioned above, we postpone the
proof of the base case $n=7$.  For distinct $a,b\in[n]$
define the set of permutations of the form $[\ldots,a,b]$:
$$A_n(a,b):=\Bigl\{\pi\in A_n: \bigl(\pi(n-1),\pi(n)\bigr)=(a,b)\Bigr\}.$$

Let $n\geq 9$, and let $L=(\tau_{s(1)},\tau_{s(2)},\ldots, \tau_{s(m)})$ be
the sequence of generators used by a Hamiltonian cycle of $\A_{n-2}$, as
guaranteed by the inductive hypothesis, in the order that they are
encountered in the cycle starting from $\id\in A_{n-2}$ (where $m=(n-2)!/2$,
and $s(i)\in\{3,5,\ldots,n-2\}$ for each $i$).   Now start from any
permutation $\pi\in A_n(a,b)$ and apply the sequence of generators $L$
(where a generator $\tau_k\in A_{n-2}$ is now interpreted as the generator $\tau_k\in A_n$ with the same name).  This gives a cycle in $\A_n$ whose vertex set is precisely $A_n(a,b)$. (The two rightmost elements $a,b$ of the permutation are undisturbed, because $L$ does not contain $\tau_n$.) Note that, for given $a,b$, different choices of the starting permutation $\pi\in A_n(a,b)$ in general result in different cycles.

We next describe the idea of the proof, before giving the details.   Consider
a cycle cover $\mathcal{C}$ comprising, for each $(a,b)\in[n]^{(2)}$, one cycle $C(a,b)$ with vertex set $A_n(a,b)$ of the form described above (so $n(n-1)$ cycles in
total).  We will link the cycles of $\mathcal{C}$ together into a single cycle by
substituting the generator $\tau_n$ at
appropriate points, in the ways discussed in \cref{parity}.  The linking
procedure will be encoded by the hypergraph of \cref{cactus2}.  The vertex
$(a,b)$ of the hypergraph will correspond to the initial cycle $C(a,b)$.
  A $3$-hyperedge $\T(a,b,c)$ will indicate a
substitution of $\tau_n$ for $\tau_{n-2}$ in $3$ of the cycles of $\mathcal{C}$,
linking them together in the manner of \cref{qset3}. The $6$-hyperedge will
correspond to the parity-breaking linkage in which $\tau_n$ is substituted
for occurrences of both $\tau_{n-2}$ and $\tau_{n-4}$, linking $6$ cycles as
in \cref{qset6}. One complication is that the starting points of the
cycles of $\mathcal{C}$ must be
chosen so that $\tau_{n-2}$- and $\tau_{n-4}$-edges occur in appropriate
places so that all these substitutions are possible. To address this, rather than choosing the cycle cover $\mathcal{C}$ at
the start, we will in fact build our final cycle sequentially, using one hyperedge at
a time, and choosing appropriate cycles $C(a,b)$ as we go.  We
will start with the $6$-hyperedge, and for each subsequent $3$-hyperedge we will
link in two new cycles.  \cref{penultimate} will ensure enough $\tau_{n-2}$-edges for subsequent steps: for any $(a,b,c)\in[n]^{(3)}$, there is a vertex of the form $[\ldots,a,b,c]$ in $C(b,c)$ followed by $\tau_{n-2}$-edge.  The inductive hypothesis \eqref{ind} will provide the $\tau_{n-4}$-edges needed for the initial $6$-fold linkage.

We now give the details. In preparation for the sequential linking procedure,
choose an acyclic connected hypergraph $([n]^{(2)},H)$ according to
\cref{cactus2}, with the $6$-hyperedge being $\T_0\cup \T_0'$, where
$\T_0:=\T(c,d,e)$ and $\T_0':=\T(a,b,e)$, and where we write
\begin{equation}
(a,b,c,d,e)=(n-4,n-3,n-2,n-1,n). \label{abcde}
\end{equation}
 Let $N=|H|-1$, and order the hyperedges
as $H=\{h_0,h_1,\ldots,h_N\}$ in such a way that $h_0=\T_0\cup \T_0'$ is the
$6$-hyperedge, and, for each $1\leq i\leq N$, the hyperedge $h_{i}$ shares
exactly one vertex with $\bigcup_{\ell=0}^{i-1} h_\ell$. (To see that this is
possible, note that for any choice of $h_0,\ldots,h_{i-1}$ satisfying this condition, connectedness of the hypergraph implies that there exists $h_i$ that shares \emph{at least} one vertex with one of its predecessors; acyclicity then implies that it
shares exactly one.)

We will construct the required Hamiltonian cycle via a sequence of steps
$j=0,\ldots, N$.  At the end of step $j$ we will have a self-avoiding
directed cycle $C_j$ in $\A_n$ with the following properties.
\begin{ilist}
  \item The vertex set of $C_j$ is the union of $A_n(x,y)$ over all
  $(x,y)\in\bigcup_{i=0}^j h_i$.
  \item \sloppy For every $(x,y,z)\in[n]^{(3)}$ such that $(y,z)\in \bigcup_{i=0}^j
  h_i$ but $\T(x,y,z)\notin \{\T_0,\T_0',h_1,h_2,\ldots,h_j\}$, there exists a permutation
  $\pi\in A_n$
  of the form $[\ldots,x,y,z]$ that is followed immediately by a
  $\tau_{n-2}$-edge in $C_j$.\fussy
\end{ilist}
We will check by induction on $j$ that the above properties hold.  The final
cycle $C_N$ will be the required Hamiltonian cycle.  The purpose of the
technical condition (ii) is to ensure that suitable edges are
available for later linkages; the idea is that the
triple $(x,y,z)$ is available for linking in two further cycles
unless it has already been used.

We will describe the cycles $C_j$ by giving their sequences of generators.
Recall that $L$ is the sequence of generators of the Hamiltonian cycle of
$\A_{n-2}$. Note that $L$ contains both $\tau_{n-2}$ and $\tau_{n-4}$, by
\cref{penultimate} and the inductive hypothesis \eqref{ind} respectively. For each of
$k=n-2,n-4$, fix some location $i$ where $\tau_k$ occurs in $L$ (so that
$s(i)=k$), and let $L[\tau_k]$ be the sequence obtained by starting at that
location and omitting this $\tau_k$ from the cycle:
$$L[\tau_k]:=\bigl(\tau_{s(j+1)},\tau_{s(j+2)}\ldots,\tau_{s(m)},\
 \tau_{s(1)},\ldots,\tau_{s(j-1)}\bigr).$$
Note that the composition in order of the elements of $L[\tau_k]$ is
$\tau^{-1}_k$.

For step $0$, let $C_0$ be the cycle that starts at $\id\in A_n$ and uses the
sequence of generators
\begin{gather*}
\tau_n, L[\tau_{n-2}],\tau_n,
L[\tau_{n-4}],\tau_n,L[\tau_{n-4}],\\
\tau_n, L[\tau_{n-2}],\tau_n, L[\tau_{n-4}],\tau_n,L[\tau_{n-4}],
\end{gather*}
(where commas denote concatenation).  This cycle is
precisely of the form illustrated in \cref{qset6} (left) by
the solid arcs and lines.  The curved arcs represent the
paths corresponding to the $L[\cdot]$ sequences.  The
vertex set of each such path is precisely $A_n(u,v)$ for
some pair $(u,v)$; we denote this path $P(u,v)$.
The solid lines represent the $\tau_n$-edges.
 Moreover, since \cref{qset6} (right)
lists the vertices (permutations) at the beginning and end
of each path $P(u,v)$, we can read off the pairs $(u,v)$. With
$a,\ldots,e$ as in \eqref{abcde}, the pairs are
$\{(d,e),(c,d),(e,c),(b,e),(a,b),(e,a)\}$. This set equals
$\T_0\cup\T_0'=h_0$, so property (i) above holds for the
cycle $C_0$.

We next check that $C_0$ satisfies (ii).  Let
$(x,y,z)\in[n]^{(3)}$ be such that $(y,z)\in h_0$.
The cycle $C_0$ includes a path $P(y,z)$ with vertex set $A_n(y,z)$
and generator sequence $L[\tau_k]$ (where $k$ is
$n-2$ or $n-4$).  Let $C(y,z)$ be the cycle that results
from closing the gap, i.e.\ appending a $\tau_k$-edge $f$ to
the end of $P(y,z)$.  Note that $P(y,z)$ and $C(y,z)$ both
have vertex set $A_n(y,z)$.  By \cref{penultimate} applied
to $\A_{n-2}$, the cycle $C(y,z)$ contains a permutation of
the form $[\ldots,x,y,z]$ immediately followed by a
$\tau_{n-2}$-edge, $g$ say.  Edge $g$ is also present
in $C_0$ unless $g=f$.  Consulting \cref{qset6}, and
again using the notation in \eqref{abcde}, we see that this happens
only in the two cases $(x,y,z)=(e,c,d),(e,a,b)$. But in
these cases we have $\T(x,y,z)=T_0,T_0'$ respectively. Thus
condition (ii) is satisfied at step $0$.

Now we inductively describe the subsequent steps.  Suppose
that step $j-1$ has been completed, giving a cycle
$C_{j-1}$ that satisfies (i) and (ii) (with parameter $j-1$
in place of $j$).   We will augment $C_{j-1}$ to obtain a
larger cycle $C_{j}$, in a manner encoded by the hyperedge
$h_j$. Let
$$h_j=\T(a,b,c)=\bigl\{(a,b),(b,c),(c,a)\bigr\}$$ (where we no longer
adopt the notation \eqref{abcde}). By our choice of the ordering of $H$,
exactly one of these pairs belongs to $\bigcup_{i=0}^{j-1} h_i$; without loss
of generality, let it be $(b,c)$. By property (ii) of the cycle $C_{j-1}$, it
contains a vertex of the form $[\ldots,a,b,c]$ immediately followed by a
$\tau_{n-2}$-edge, $f$ say.  Delete edge $f$ from $C_{j-1}$ to obtain a
directed path $P_{j-1}$ with the same vertex set. Append to $P_{j-1}$ the
directed path that starts at the endvertex of $P_{j-1}$ and then uses the
sequence of generators
$$\tau_n,L[\tau_{n-2}],\tau_n,L[\tau_{n-2}],\tau_n.$$
Since $\order(\tau_n\tau_{n-2}^{-1})=3$, this gives a
cycle, which we denote $C_j$.

The new cycle $C_j$ has precisely the form shown in
\cref{qset3} (left) by the solid arcs and lines, where
$C_{j-1}$ is the thin blue cycle
in the upper left, containing the circled vertex, which is
the permutation $[\ldots,a,b,c]$.  The arc is $P_{j-1}$, and the dotted edge
is $f$.  As before, the
permutations at the filled discs may be read from
\cref{qset3} (right).  Thus, $C_j$ consists of the path
$P_{j-1}$, together with two paths $P(a,b),P(c,a)$ with
respective vertex sets $A_n(a,b),A_n(c,a)$ (the other two
thin blue arcs in the figure), and three $\tau_n$-edges (thick black lines)
connecting these three paths. Hence $C_j$ satisfies
property (i).

We now check that $C_j$ satisfies (ii).  The argument is similar
to that used in step $0$.  Let $(x,y,z)$ satisfy the
assumptions in (ii).  We consider two cases.  First suppose
$(y,z)\notin h_j$.  Then $(y,z)\in \bigcup_{i=0}^{j-1} h_i$, and
so property (ii) of $C_{j-1}$ implies that $C_{j-1}$ has a vertex of the form
$[\ldots,x,y,z]$ followed by a $\tau_{n-2}$-edge $g$, say.
Then $g$ is also present in $C_j$ unless $g=f$.  But in that case we have $(x,y,z)=(a,b,c)$, and so $\T(x,y,z)=h_j$,
contradicting the assumption on $(x,y,z)$.
On the other hand, suppose $(y,z)\in h_j$. Then $(y,z)$ equals
$(a,b)$ or $(c,a)$.  Suppose the former; the argument in
the latter case is similar. Let $C(a,b)$ be the cycle
obtained by appending a $\tau_{n-2}$-edge to $P(a,b)$.
Applying \cref{penultimate} shows that $C(a,b)$ contains a vertex of
the form $[\ldots,x,a,b]$ followed by a $\tau_{n-2}$-edge
$g$, say. Then $g$ is also present in $P(a,b)$ unless $x=c$,
but then $\T(x,y,z)=h_j$, contradicting the assumption in
(ii). Thus, property (ii) is established.

To conclude the proof, note that the final cycle $C_N$ is
Hamiltonian, by property (i) and the fact that the hypergraph of \cref{cactus2} has vertex set $[n]^{(2)}$.  To check that it includes some
$\tau_{n-2}$-edge as required for \eqref{ind},
recall that $h_N$ has only one vertex in
common with $h_0,\ldots,h_{N-1}$, so there exist $x,y,z$
with $(y,z)\in h_N$ but $\T(x,y,z)\notin H$.  Hence property (ii)
implies that $C_N$ contains a $\tau_{n-2}$-edge.
\end{proof}

\begin{proof}[Proof of \cref{main}, base case]
For the base case of the induction, we give an explicit
directed Hamiltonian cycle of $\A_7$ that includes $\tau_5$
at least once.  (In fact the latter condition must
necessarily be satisfied, since, as remarked earlier,
\cref{rankin} implies that there is no Hamiltonian cycle
using only $\tau_3$ and $\tau_7$.)

\begin{table}
{\renewcommand{\arraystretch}{1.25}
\begin{tabular}{|c|c|c|}
  \hline
row & permutations & generator \\
  \hline
1 & $6\overline{7} \overline{7}\overline{7}\s\s\s,
\overline{7}\overline{7}\overline{7} 6 \s\s\s$ & $\tau_5$ \\
2 & $67\s\s\s\s\s,76\s\s\s\s\s$ & $\tau_3$ \\
3 & $567\overline{1}\s\s\s, 576\s\s\s\s$ & $\tau_5$ \\
4 & $2567\s\s\s,4576\s\s\s$ & $\tau_5$ \\
5 & $5671234,5612347,5623714,5637142$ & $\tau_3$ \\
6 & $5623471,5671423$ & $\tau_5$  \\
7 & otherwise & $\tau_7$\\
  \hline
\end{tabular}
\vspace{3pt}
} \caption{Rules for generating a directed Hamiltonian
cycle of $\A_7$. Permutations of the given forms should be
followed by the generator in the same row of the table. The
symbol $\s$ denotes an arbitrary element of $[7]$, and
$\overline{a}$ denotes any element other than
$a$.}\label{a7}
\end{table}
\cref{a7} specifies which generator the cycle uses
immediately after each permutation of $A_7$, as a function
of the permutation itself. The skeptical reader may simply
check by computer that these rules generate the required
cycle. But the rules were constructed by hand; below we
briefly explain how.

First suppose that from every permutation of $A_7$ we apply
the $\tau_7$ generator, as specified in row 7 of the table.
This gives a cycle cover comprising $|A_7|/7=360$ cycles of
size $7$. Now consider the effect of replacing some of
these $\tau_7$'s according to rows 1--6 in succession. Each
such replacement performs a linkage, as in
\cref{qset3,qset6}. Row 1 links the cycles in sets of $3$
to produce $120$ cycles of length $21$, each containing
exactly one permutation of the form $67\s\s\s\s\s$ or
$76\s\s\s\s\s$. Row 2 then links these cycles in sets of
$5$ into $24$ cycles of length $105$, each containing
exactly one permutation of the form $675\s\s\s\s$ or
$765\s\s\s\s$. Rows $3$ and $4$ link various sets of three
cycles, permuting elements $1234$, to produce $6$ cycles.
Finally, rows $5$ and $6$ break the parity barrier as
discussed earlier, uniting these $6$ cycles into one.
\end{proof}

\section{Even size}

We briefly discuss a possible approach for even $n$. Recall
that $M_n$ is the maximum length of a cycle $\S_n$ in which
no two permutations are related by an adjacent
transposition.

To get a cycle longer than $M_{n-1}$ we must use $\tau_n$. But this is an odd
permutation, so we cannot remain in the alternating group $A_n$.  We suggest
following $\tau_n$ immediately by another odd generator, say $\tau_{n-2}$, in
order to return to $A_n$ (note that $\tau_2$ is forbidden).  In order to
include permutations of the form $[\ldots,j]$ for every $j\in[n]$, we need to
perform such a transition (at least) $n$ times in total in our cycle.  In the
$i$th transition we visit one odd permutation, $\alpha_i$ say, between the
generators $\tau_n$ and $\tau_{n-2}$.  For the remainder of the cycle we
propose using only generators $\tau_k$ for odd $k$, so that we remain in
$A_n$.

In fact, one may even try to fix the permutations $\alpha_1,\ldots,\alpha_n$ in
advance.  The problem then reduces to that of finding long self-avoiding
directed paths in $\A_{n-1}$, with specified start and end vertices, and
avoiding certain vertices -- those that would result in a permutation that is
related to some $\alpha_i$ by an elementary transposition.  Since there are $n$
$\alpha_i$'s and $n-1$ elementary transpositions, there are $O(n^2)$ vertices
to be avoided in total.

Since, for large $n$, the number of vertices to be avoided is much smaller than $|A_{n-1}|$, we think it very likely that paths of length
$(1-o(1))|A_{n-1}|$ exist, which would give $M_n\geq (1-o(1))n!/2$ as
$n\to\infty$.  It is even plausible that $M_n\geq n!/2-O(n^2)$ might be
achievable.  The graph $\A_{n-1}$ seems to have a high degree of global connectivity,
as evidenced by the diverse constructions of cycles of close to optimal
length in \cite{horovitz-etzion,yehezkeally-schwartz,zhang-ge}.  For a
specific approach (perhaps among others),
one might start with a short path linking the required
start and end vertices, and then try to successively link in short cycles
(say those that use a single generator such as $\tau_{n-1}$) in the manner
of \cref{qset3}, leaving out the relatively few short cycles that contain
forbidden vertices.  It is conceivable that the forbidden permutations might
conspire to prevent such an approach, for example by blocking even short paths between the start and end vertices.  However, this appears unlikely, especially given the additional flexibility in the choice of $\alpha_1,\ldots,\alpha_n$.

While there appear to be no fundamental obstacles, a proof for general even
$n$ along the above lines might be rather messy.  (Of course, this does not preclude some other more elegant approach).  Instead, the approach was combined with a
computer search to obtain a cycle of length $315(=6!/2-45)$ for $n=6$, which
is presented below, answering a question of \cite{horovitz-etzion}, and
improving the previous record $M_6\geq 57$ \cite{horovitz-etzion} by more
than a factor of $5$.  The case $n=6$ is in some respects harder than larger
$n$: the forbidden vertices form a larger fraction of the total, and $\A_5$
has only two generators, reducing available choices.  (On the other hand, the
search space is of course relatively small).  Thus, this result also lends support
to the belief that $M_n\geq (1-o(1))n!/2$ as $n\to\infty$.

The search space was reduced by quotienting the graph $\S_6$ by a group of
order $3$ to obtain a Schreier graph, giving a cycle in which the sequence of
generators is repeated $3$ times. The cycle uses the sequence of generators
$(\tau_{k(i)})$ where $(k(i))_{i=1}^{315}$ is the sequence
\newcommand{\z}{\hat{3}}
\begin{align*}
\bigl(
&\mathtt{64\ 55\z5\z\z5555\z555\z555\z\z5555\z5555\z555\z5555\z\z555\z5555\z\z} \\
&\mathtt{64\ 555\z\z5\z55\z\z55\z\z5\z555\z5555\z555\z555\z\z5555\z555\z5555\z\z5}
\bigr)^3.
\end{align*}
(Here, commas are omitted, the superscript indicates that the sequence is repeated three times, and $3$'s are marked as an aid to visual clarity).

\bibliographystyle{abbrv}
\bibliography{bib}

\end{document}